\crefname{equation}{}{} 
\crefname{definition}{Definition}{Definitions}
\crefname{section}{Section}{Sections}
\crefname{theorem}{Theorem}{Theorems}
\crefname{lemma}{Lemma}{Lemmas}
\crefname{assumption}{Assumption}{Assumptions}
\crefname{enumi}{}{}
\newcommand{\weakto}{\rightharpoonup}
\newcommand{\Lagrangian}[2]{\mathcal{L} \p{#1, #2}}
\newcommand{\funcFirst}{f}
\newcommand{\funcSecond}{g}
\newcommand{\linOp}{L}
\newcommand\inner[2]{\left\langle #1, #2 \right\rangle}
\newcommand\PrimS{\ensuremath{\mathcal{H}}}
\newcommand\DualS{\ensuremath{\mathcal{G}}}
\newcommand\Real{\ensuremath{\mathbb{R}}}
\newcommand\RealExt{\ensuremath{\Real \cup \set{\infty}}}
\newcommand\p[1]{\ensuremath{\mathord{\left(#1\right)}}}
\newcommand\set[1]{\ensuremath{\mathord{\left\lbrace#1\right\rbrace}}}
\newcommand\setcond[2]{\ensuremath{\mathord{\left\lbrace#1\,\middle\vert\,#2\right\rbrace}}}
\newcommand\norm[1]{\ensuremath{\mathord{\left\lVert#1\right\rVert}}}
\newcommand\inpr[2]{\ensuremath{\mathord{\left\langle#1, #2\right\rangle}}}
\newcommand\Prox[2]{\ensuremath{\mathrm{Prox}^{#1}_{#2}}}
\DeclareMathOperator*{\argmin}{argmin}
\DeclareMathOperator*{\minimize}{minimize}
\DeclareMathOperator*{\maximize}{maximize}
\theoremstyle{plain}
\newtheorem{lemma}{Lemma}
\newtheorem{theorem}{Theorem}
\newtheorem{assumption}{Assumption}
\newtheorem{definition}{Definition}
\title{The Chambolle--Pock method converges weakly with $\theta>1/2$ and $\tau \sigma \|L\|^2<4/(1+2\theta)$}
\author{Sebastian Banert \and Manu Upadhyaya \and Pontus Giselsson}
\date{October 2025}
\begin{document}

\maketitle

\begin{abstract}
  \noindent
  The Chambolle--Pock method is a versatile three-parameter algorithm designed to solve a broad class of composite convex optimization problems, which encompass two proper, lower semicontinuous, and convex functions, along with a linear operator $L$. The functions are accessed via their proximal operators, while the linear operator is evaluated in a forward manner. Among the three algorithm parameters $\tau $, $\sigma $, and $\theta$; $\tau,\sigma >0$ serve as step sizes for the proximal operators, and $\theta$ is an extrapolation step parameter. Previous convergence results have been based on the assumption that $\theta=1$. We demonstrate that weak convergence is achievable whenever $\theta> 1/2$ and $\tau \sigma \|L\|^2<4/\p{1+2\theta}$. Moreover, we establish tightness of the step size bound by providing an example that is nonconvergent whenever the second bound is violated.
\end{abstract}

\section{The Chambolle--Pock method}
The Chambolle--Pock method \cite{chambolleFirstOrderPrimalDualAlgorithm2011}, also known as the primal--dual hybrid gradient method, solves convex-concave saddle-point problems of the form
\begin{align}
  \minimize_{x\in\PrimS}\maximize_{y\in\DualS}\; f\p{x} + \inpr{Lx}{y} - g^*\p{y},
  \label{eq:pd_problem}
\end{align}
where \(f\colon\PrimS\to\RealExt\) and \(g\colon\DualS\to\RealExt\) are proper, convex, and lower semi\-continu\-ous functions, \(g^*\) is the convex conjugate of \(g\), \(L\colon\PrimS\to\DualS\) is a nonzero bounded linear operator, and \(\PrimS\) and \(\DualS\) are real Hilbert spaces. This is a primal--dual formulation of the primal composite optimization problem
\begin{align}
  \minimize_{x\in\PrimS}\; f\p{x} + g \p{L x}.
  \label{eq:prim_problem}
\end{align}
We assume that \eqref{eq:pd_problem} has at least one solution $(x_\star,y_\star)\in\PrimS\times\DualS$ that satisfies the Karush--Kuhn--Tucker (KKT) condition
\begin{subequations}\label{eq:kkt}
    \begin{align}
      - L^* y_\star &\in \partial f \p{x_\star}, \label{eq:kkt:f}\\
      L x_\star &\in \partial g^* \p{y_\star}.\label{eq:kkt:g}
    \end{align}
\end{subequations}
The Chambolle--Pock method searches for such KKT points by iterating
\begin{subequations}\label{eq:CP-iteration}
    \begin{align}
      x_{k+1} &= \Prox{\tau }{f} \p{x_k - \tau  L^* y_k},  \label{eq:CP-iteration:x} \\
      y_{k+1} &= \Prox{\sigma }{g^*} \p{y_k + \sigma  L \p{x_{k+1} + \theta \p{x_{k+1} - x_k}}},\label{eq:CP-iteration:y} 
    \end{align}
\end{subequations}
where each function's proximal operators, the linear operator \(L\) and its adjoint \(L^*\) are evaluated once each in every iteration. 

This algorithm is guaranteed to converge to a solution whenever \(\theta=1\) and the positive step size parameters \(\tau \) and \(\sigma \) satisfy \(\tau \sigma \norm{L}^2 <1\) as shown in the original work \cite[Theorem~1]{chambolleFirstOrderPrimalDualAlgorithm2011}, where \(\norm{L}\) is the operator norm. 
It was soon thereafter realized in \cite{he2012convergence} that for $\theta=1$, the algorithm is an instance of the preconditioned proximal point method, applied to a specific maximally monotone inclusion problem, with a {\emph{strongly positive}} and {\emph{symmetric}} preconditioner.
Strong positivity of the preconditioner is lost for $\theta=1$ whenever \(\tau\sigma\norm{L}^2 \geq 1\), disabling analysis via the standard preconditioned proximal point method. 
In \cite{condat2013primaldual}, sequence convergence with $\tau\sigma\|L\|^2\leq 1$ is established using a modified proximal point analysis and in  \cite{maUnderstandingConvergencePreconditioned2023a,yanImprovedConditionsPrimalDual2024}, convergence to a solution is established for the wider range \(\tau\sigma\norm{L}^2<4/3\). All these results are in finite-dimensional settings and assume $\theta=1$. A modification of the Chambolle--Pock method, which relies on symmetric preconditioning and permits larger values of \(\tau\sigma\|L\|^2\), was presented in \cite{he2022generalized}.

The symmetry of the proximal point preconditioner is lost whenever $\theta\neq 1$. Non-symmetric proximal point iterates are often augmented with different types of correction steps such as projection or momentum correction as in \cite{he2012convergence,latafat2017asymmetric,morinNonlinearForwardBackwardSplitting2023,Giselsson2021Nonlinear} to form convergent algorithms. To the best of our knowledge, no convergence results exist for the pure Chambolle--Pock method, without additional correction, when $\theta\neq 1$.

The contribution of this paper lies in establishing, in infinite dimensional Hilbert spaces, ergodic \(\mathcal{O}\p{1/n}\) primal--dual gap convergence whenever \(\theta\geq 1/2\),
\begin{align}
   \tau \sigma \norm{L}^{2} \leq \frac{4}{1 + 2 \theta}, \label{eq:tau-bounds}
\end{align}
and one of these inequalities holds strictly, as well as weak convergence to a solution whenever both these inequalities are strict. The analysis is based on a Lyapunov inequality that has been derived with the aid of the methodology in \cite{upadhyayaAutomatedTightLyapunov2025}. The type of Lyapunov function that we are using is, to the best of our knowledge, novel and unrelated to the status of the algorithm as a preconditioned proximal point algorithm. We also establish tightness of the step size bound by providing an example of which sequence convergence is lost whenever 
\begin{align}
  \tau  \sigma  \norm{L}^{2} \geq \frac{4}{1 + 2 \theta}.
\end{align}

\section{Preliminaries}
Throughout this paper, \(\PrimS\) and \(\DualS\) denote real Hilbert spaces. All norms $\norm{\cdot}$ are canonical norms where the inner product will be clear from the context. Suppose that \(f\colon \PrimS \to \RealExt\) is a proper, convex, and lower semicontinuous function. The \emph{subdifferential} of \(f\) at a point \(x\in \PrimS\) is defined as
\begin{align}\label{eq:subdifferential-definition}
  \partial f \p{x} = \setcond{y \in \PrimS}{\forall z \in \PrimS, f\p{z} \geq f\p{x} + \inpr{y}{z - x}}.
\end{align}
The \emph{proximal point} of $f$ at \(x \in \PrimS\) with \emph{step size} \(\tau > 0\) is given by
\begin{align*}
  \Prox{\tau}{f} \p{x} = \argmin_{y \in \PrimS} \p{f\p{y} + \frac{1}{2 \tau} \norm{y - x}^2},
\end{align*}
which is uniquely defined. If $x,p\in\PrimS$, we have from \cite[Section~24.1]{Bauschke_Combettes_2017} that
\begin{align}\label{eq:prox-subdifferential}
  p = \Prox{\tau}{f} \p{x} \quad \text{if and only if} \quad \frac{1}{\tau} \p{x - p} \in \partial f \p{p}
\end{align}
and
\begin{align}\label{eq:f_of_prox_finite}
    f \p{\Prox{\tau}{f}(x)}<\infty.
\end{align}
Moreover, the \emph{convex conjugate} of $f$, denoted $f^{*}:\PrimS\to \RealExt$, is the proper, convex, and lower semi\-continu\-ous function given by $f^{*}(u) = \sup_{x\in\PrimS}\p{\inner{u}{x} - f(x)}$ for each $u\in\PrimS$.

\section{Main results}

Our convergence results are derived under the following assumption.
\begin{assumption}
    Assume that
    \begin{enumerate}[label=\roman{enumi})]
    \item \(f \colon \PrimS \to \RealExt\) and \(g \colon \DualS \to \RealExt\) are proper, convex, and lower semicontinuous,
    \item \(L \colon \PrimS \to \DualS\) is a nonzero bounded linear operator, and
    \item there exists at least one point $(x_\star,y_\star)\in\PrimS\times\DualS$ such that both inclusions in \cref{eq:kkt} hold.
    \end{enumerate}
    \label{ass:prob}
\end{assumption}
One of our results concerns a primal--dual gap, which we introduce next. The function given by
\begin{align*}
    \Lagrangian{x}{y} = \funcFirst\p{x} + \inner{y}{\linOp x} - \funcSecond^* \p{y}
\end{align*}
is the \emph{Lagrangian function} associated with \cref{eq:pd_problem}. Given a saddle point \(\p{x_\star, y_\star} \in \PrimS \times \DualS\) satisfying the KKT condition \cref{eq:kkt}, we define the \emph{primal--dual gap function} $\mathcal{D}_{x_\star,y_\star}:\PrimS\times\DualS \to \RealExt$ as
\begin{align}
    \mathcal{D}_{x_\star,y_\star}(x,y) = \Lagrangian{x}{y_\star}-\Lagrangian{x_\star}{y}
    \label{eq:pd_gap_fcn}
\end{align}
for each $x,y\in\PrimS\times\DualS$. It is straightforward to verify that $\mathcal{D}_{x_\star,y_\star}(x,y)\geq 0$ for each $x,y\in\PrimS\times\DualS$ and it will serve as a replacement for function-value suboptimality.
Our first main result shows $\mathcal{O}\p{1/K}$ ergodic convergence for this primal--dual gap.

\begin{theorem}[\textnormal{Ergodic convergence rate}]
    Suppose that \cref{ass:prob} holds. Let $(x_\star,y_\star)\in\PrimS\times\DualS$ be an arbitrary point satisfying \cref{eq:kkt}, let $\p{x_k}_{k=0}^{\infty}$ and $\p{y_k}_{k=0}^{\infty}$ be generated by \cref{eq:CP-iteration} with $\theta\geq 1/2$ and $\tau ,\sigma >0$ satisfying $\tau \sigma \|L\|^2\leq 4/\p{1+2\theta}$ and arbitrary \(\p{x_0, y_0} \in \PrimS \times \DualS\). Further, suppose that at least one of $\theta\geq{1}/{2}$ and $\tau \sigma \|L\|^2\leq 4/\p{1+2\theta}$ holds with strict inequality. Let $\bar{x}_K=\p{\sum_{k=1}^Kx_k}/K$ and $\bar{y}_K=\p{\sum_{k=1}^Ky_k}/K$. Then for each positive integer $K$, the ergodic duality gap $\mathcal{D}_{x_\star,y_\star}(\bar{x}_K,\bar{y}_K)$ converges as $\mathcal{O}\p{1/K}$.
  \label{thm:pd}
\end{theorem}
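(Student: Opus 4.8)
The plan is to bound the per-iteration gap $\mathcal{D}_{x_\star,y_\star}(x_{k+1},y_{k+1})$ by a telescoping quantity and then invoke convexity of the gap. First I would translate the two proximal updates in \cref{eq:CP-iteration} into subdifferential inclusions via \eqref{eq:prox-subdifferential}, obtaining $\tfrac{1}{\tau}\p{x_k-x_{k+1}}-L^*y_k\in\partial f\p{x_{k+1}}$ and $\tfrac{1}{\sigma}\p{y_k-y_{k+1}}+L\p{x_{k+1}+\theta\p{x_{k+1}-x_k}}\in\partial g^*\p{y_{k+1}}$. Testing the subgradient inequality \eqref{eq:subdifferential-definition} at $x_\star$ and at $y_\star$ respectively yields upper bounds for $f\p{x_{k+1}}-f\p{x_\star}$ and $g^*\p{y_{k+1}}-g^*\p{y_\star}$. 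Since by definition $\mathcal{D}_{x_\star,y_\star}(x_{k+1},y_{k+1})$ equals the sum of these two differences plus the bilinear terms $\inner{y_\star}{Lx_{k+1}}-\inner{y_{k+1}}{Lx_\star}$, adding the bounds produces a single inequality whose right-hand side contains only inner products of iterate displacements.

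Next I would convert those inner products into squared norms. The two proximal inner products $\tfrac{1}{\tau}\inner{x_k-x_{k+1}}{x_{k+1}-x_\star}$ and $\tfrac{1}{\sigma}\inner{y_k-y_{k+1}}{y_{k+1}-y_\star}$ are rewritten with the three-point identity $2\inner{a-b}{b-c}=\norm{a-c}^2-\norm{a-b}^2-\norm{b-c}^2$; this produces the telescoping increments $\tfrac{1}{2\tau}\p{\norm{x_k-x_\star}^2-\norm{x_{k+1}-x_\star}^2}$ and its dual analogue, together with negative squared-displacement terms $-\tfrac{1}{2\tau}\norm{x_{k+1}-x_k}^2-\tfrac{1}{2\sigma}\norm{y_{k+1}-y_k}^2$. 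The bilinear terms reorganize into the telescoping cross term $\Phi_k=\inner{L\p{x_k-x_\star}}{y_k-y_\star}$, a residual $\theta\inner{L\p{x_{k+1}-x_k}}{y_{k+1}-y_k}$, and, when $\theta\neq1$, an additional term $-\p{1-\theta}\inner{L\p{x_{k+1}-x_k}}{y_k-y_\star}$ that does not telescope on its own. Collecting everything, I would define a Lyapunov function $V_k$ built from $\tfrac{1}{2\tau}\norm{x_k-x_\star}^2+\tfrac{1}{2\sigma}\norm{y_k-y_\star}^2$ corrected by $\Phi_k$ (and whatever further displacement terms the residual forces), so that the inequality assumes the form $\mathcal{D}_{x_\star,y_\star}(x_{k+1},y_{k+1})\le V_k-V_{k+1}$.

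The main obstacle is to certify that $V_k$ is bounded below and that the leftover negative-definite and bilinear terms are genuinely nonpositive, since this is exactly where the asymmetry introduced by $\theta\neq1$ breaks the standard symmetric preconditioned proximal-point argument. Concretely, after applying Cauchy--Schwarz $\lvert\inner{L u}{v}\rvert\le\norm{L}\,\norm{u}\,\norm{v}$ to the cross terms and Young's inequality to split them against the squared displacements, the coefficients must assemble into a quadratic form in $\p{\norm{x_{k+1}-x_k},\norm{y_{k+1}-y_k}}$ and $\p{\norm{x_k-x_\star},\norm{y_k-y_\star}}$ that is positive semidefinite precisely when $\tau\sigma\norm{L}^2\le 4/\p{1+2\theta}$; this threshold is what the Lyapunov methodology of \cite{Upadhyaya2023Lyapunov} is used to pin down. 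I expect the requirement $\theta\ge1/2$ to enter in controlling the non-telescoping term $-\p{1-\theta}\inner{L\p{x_{k+1}-x_k}}{y_k-y_\star}$, while the hypothesis that one of the two inequalities is strict furnishes the strict slack needed for the quadratic form to dominate and for $V_k$ to stay nonnegative.

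Finally, telescoping $\mathcal{D}_{x_\star,y_\star}(x_{k+1},y_{k+1})\le V_k-V_{k+1}$ over $k=0,\dots,K-1$ and using $V_K\ge0$ gives $\sum_{k=1}^{K}\mathcal{D}_{x_\star,y_\star}(x_k,y_k)\le V_0$, a bound independent of $K$. Because $\Lagrangian{\cdot}{y_\star}$ is convex and $\Lagrangian{x_\star}{\cdot}$ is concave, the gap $\mathcal{D}_{x_\star,y_\star}$ is jointly convex, so Jensen's inequality yields $\mathcal{D}_{x_\star,y_\star}(\bar{x}_K,\bar{y}_K)\le\tfrac{1}{K}\sum_{k=1}^{K}\mathcal{D}_{x_\star,y_\star}(x_k,y_k)\le V_0/K$, which is the claimed $\mathcal{O}\p{1/K}$ rate.
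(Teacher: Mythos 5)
Your proposal reproduces the outer skeleton of the paper's argument correctly --- the subdifferential inclusions, the telescoping bound $\sum_{k}\mathcal{D}_{x_\star,y_\star}(x_{k+1},y_{k+1})\le V_0$, and the final Jensen step are all exactly what the paper does --- but the critical middle step is a genuine gap, and the specific route you sketch for it cannot be completed in the stated parameter range. You defer the handling of the non-telescoping term $-\p{1-\theta}\inpr{L\p{x_{k+1}-x_k}}{y_k-y_\star}$ to Cauchy--Schwarz and Young's inequality against squared displacements, but any such splitting produces an uncompensated term $+\varepsilon\norm{y_k-y_\star}^2$: since the scheme has no contraction factor on the distance to the solution, nothing in the telescoping budget can absorb a positive multiple of $\norm{y_k-y_\star}^2$, so the quadratic form you hope is ``positive semidefinite precisely when $\tau\sigma\norm{L}^2\le 4/\p{1+2\theta}$'' never materializes this way. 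Separately, your Lyapunov candidate --- distances $\frac{1}{2\tau}\norm{x_k-x_\star}^2+\frac{1}{2\sigma}\norm{y_k-y_\star}^2$ corrected by the cross term $\Phi_k=\inpr{L\p{x_k-x_\star}}{y_k-y_\star}$ --- is bounded below only when $\tau\sigma\norm{L}^2\le 1$, whereas the theorem permits, e.g., $\theta=1/2$ and $\tau\sigma\norm{L}^2$ up to $2$; this is exactly the loss of strong positivity of the proximal-point preconditioner that the paper's introduction identifies as the obstruction to the standard argument, so no amount of tuning within your ansatz closes the gap.

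The paper escapes both problems with two ideas absent from your plan. First, its Lyapunov function carries the function-value gap itself, $\theta F_k$, as a term, and the per-iteration decrease is obtained from the decomposition $\theta F_{k+1}-\theta F_k+F_k+G_k=\p{\theta-\tfrac12}\p{F_{k+1}-F_k}+F_{k+1}+\tfrac12\p{F_k-F_{k+1}}+G_k$, where each piece is bounded by a separate subgradient (interpolation) inequality evaluated not only at the saddle point but also between consecutive iterates (\cref{lem:interpolations}); this is where $\theta\ge 1/2$ enters, structurally rather than as a Young-inequality tradeoff. Second, the problematic coupling between the dual distance and primal displacements is absorbed exactly, by completing squares into a \emph{shifted} dual anchor $\norm{y_k-y_\star+\sigma\theta\p{Lx_{k+1}-Lx_k}}^2$ together with a negative correction $-\frac{\sigma\p{4\theta^2+1}}{16}\norm{Lx_{k+1}-Lx_k}^2$, and nonnegativity of the resulting $V_k$ is certified by the coefficient lemma showing $8\theta-\tau\sigma\norm{L}^2\p{4\theta^2+1}>0$ under the hypotheses. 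Without these devices --- or some equivalent replacement --- your outline stalls precisely at the point you flag as ``the main obstacle.''
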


Our second main result shows weak sequence convergence.

\begin{theorem}[\textnormal{Weak convergence}]
    Suppose that \cref{ass:prob} holds. Let $\p{x_k}_{k=0}^{\infty}$ and $\p{y_k}_{k=0}^{\infty}$ be generated by \cref{eq:CP-iteration} with $\theta > 1/2$ and $\tau ,\sigma >0$ satisfying $\tau \sigma \|L\|^2<4/\p{1+2\theta}$ and arbitrary \(\p{x_0, y_0} \in \PrimS \times \DualS\).  Then $(x_{k},y_k)\weakto(x_\star,y_\star)$ for some KKT point $(x_\star,y_\star)\in\PrimS\times\DualS$ satisfying \cref{eq:kkt}.
  \label{thm:sequence}
\end{theorem}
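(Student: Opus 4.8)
The plan is to prove weak convergence through the Opial-style argument that is standard for Fej\'er-monotone sequences, the non-standard ingredient being a Lyapunov function adapted to the asymmetry introduced by $\theta \neq 1$. Fix an arbitrary KKT point $\p{x_\star, y_\star}$ and write $z_k = \p{x_k, y_k}$. The starting point is the Lyapunov inequality underlying \cref{thm:pd}: there is a nonnegative sequence $V_k$, comparable to $\norm{z_k - z_\star}^2$ up to a vanishing correction in $\norm{z_k - z_{k-1}}^2$, such that $V_{k+1} \leq V_k - c\,\p{\norm{x_{k+1} - x_k}^2 + \norm{y_{k+1} - y_k}^2}$ for some constant $c > 0$. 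The crucial point is that both inequalities $\theta > 1/2$ and $\tau\sigma\norm{L}^2 < 4/\p{1 + 2\theta}$ are strict, which is exactly what forces $c$ to be strictly positive; in \cref{thm:pd} one of them may be tight, and then only $c = 0$ (pure monotonicity) is available, sufficient for the ergodic rate but not for residual summability.

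From this descent inequality I would extract two consequences. First, $V_k$ is nonincreasing and nonnegative, hence bounded, which together with its comparability to $\norm{z_k - z_\star}^2$ shows that $\p{z_k}$ is bounded and therefore possesses weak sequential cluster points. Second, telescoping yields $\sum_k \p{\norm{x_{k+1} - x_k}^2 + \norm{y_{k+1} - y_k}^2} < \infty$, so in particular $\norm{x_{k+1} - x_k} \to 0$ and $\norm{y_{k+1} - y_k} \to 0$.

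Next I would identify weak cluster points as KKT points. Rewriting \cref{eq:CP-iteration} via the prox characterization \cref{eq:prox-subdifferential} gives
\[
  \begin{pmatrix} \tfrac{1}{\tau}\p{x_k - x_{k+1}} + L^*\p{y_{k+1} - y_k} \\ \tfrac{1}{\sigma}\p{y_k - y_{k+1}} + \theta L\p{x_{k+1} - x_k} \end{pmatrix} \in T\p{x_{k+1}, y_{k+1}},
\]
where $T\p{x,y} = \p{\partial f\p{x} + L^* y,\; \partial g^*\p{y} - Lx}$ is maximally monotone (a separable subdifferential plus a bounded skew-adjoint linear part) and its zeros are precisely the KKT points of \cref{eq:kkt}. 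Since $\norm{x_{k+1} - x_k} \to 0$ and $\norm{y_{k+1} - y_k} \to 0$ and $L$ is bounded, the left-hand side converges strongly to $0$. The graph of a maximally monotone operator is sequentially closed in the weak--strong topology, so any weak cluster point $\p{\bar x, \bar y}$ of $\p{z_{k+1}}$ --- equivalently of $\p{z_k}$, as the unit shift is immaterial once residuals vanish --- satisfies $0 \in T\p{\bar x, \bar y}$ and is thus a KKT point.

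Finally, I would invoke Opial's lemma. For every KKT point $z_\star$ the associated $V_k$ is nonincreasing and bounded below, hence convergent, and since its residual correction vanishes, $\lim_k \norm{z_k - z_\star}$ exists for every solution $z_\star$. Combined with the fact that every weak cluster point of $\p{z_k}$ is a solution, Opial's lemma yields that the whole sequence $\p{z_k}$ converges weakly to a single KKT point. The main obstacle is the first step: producing the Lyapunov inequality with a strictly positive residual coefficient under exactly the bound $\tau\sigma\norm{L}^2 < 4/\p{1+2\theta}$. Because the preconditioner loses symmetry when $\theta \neq 1$, the naive quadratic in $z_k - z_\star$ is no longer monotone along the iteration, and one must augment it with carefully weighted $\norm{x_k - x_{k-1}}^2$ and cross terms whose coefficients are pinned down by the saddle-point structure; verifying that the resulting quadratic form is simultaneously nonnegative and strictly decreasing is the crux, and is where the methodology of \cite{Upadhyaya2023Lyapunov} enters.
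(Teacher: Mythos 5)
Your proposal is correct and takes essentially the same route as the paper: the Lyapunov inequality of \cref{lem:lyapunov-inequality} (whose residual coefficients are strictly positive precisely because both parameter inequalities are strict, cf.\ \cref{lem:coeff_pos,lem:coeff_nonneg}) gives boundedness and vanishing increments, weak cluster points are identified as KKT points via weak--strong closedness of the maximally monotone operator $A(x,y)=\partial f(x)\times\partial g^*(y)+(L^*y,-Lx)$, and an Opial-type lemma (\cite[Lemma~2.47]{Bauschke_Combettes_2017} in the paper) upgrades this to weak convergence of the whole sequence. The only discrepancy is cosmetic: the paper's inequality controls a shifted residual $\norm{y_{k+1}-y_k-\sigma\p{\tfrac{1}{2}\p{Lx_{k+1}-Lx_{k+2}}-\theta\p{Lx_k-Lx_{k+1}}}}^2$ and a combination of consecutive $x$-increments rather than $\norm{x_{k+1}-x_k}^2+\norm{y_{k+1}-y_k}^2$ directly, but the consequences you draw from your idealized descent inequality (vanishing increments, existence of $\lim_k\norm{z_k-z_\star}$ for every KKT point) are exactly those the paper extracts.
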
 

Note that \cref{thm:pd} allows for equality in \(\tau  \sigma  \norm{L}^{2} \leq 4/\p{1 + 2 \theta}\), while \cref{thm:sequence} requires strict inequality. 

The remainder of the paper is devoted to proving these results and giving a tightness guarantee for \cref{thm:sequence}. In \cref{sec:lemma}, we introduce two lemmas, which subsequently find their application in \cref{sec:Lyapunov}, where our Lyapunov analysis is presented. \Cref{sec:thm_proofs} proves \cref{thm:pd,thm:sequence} while \cref{sec:counterexample} provides an example for which the sequence fails to converge whenever $\tau \sigma \|L\|^2\geq 4/(1+2\theta)$, implying that \cref{thm:sequence} is tight in this sense. Finally, \cref{sec:boundary} shows that $\|x_{k+2}-x_k\|\to 0$ on the boundary of the step-size condition, i.e., whenever $\tau \sigma \|L\|^2=4/(1+2\theta)$.

\section{Two lemmas}\label{sec:lemma}

Our Lyapunov analysis entails evaluating the primal--dual gap function $\mathcal{D}_{x_\star,y_\star}$ at points generated by the algorithm. To this end, we introduce the following definition.

\begin{definition}\label{def:FkGk}
    Suppose that \cref{ass:prob} holds. Let the sequences \(\p{x_k}_{k =0}^{\infty}\) and \(\p{y_k}_{k =0}^{\infty}\) be generated by \cref{eq:CP-iteration} for some initial point $\p{x_0,y_0}\in \PrimS\times \DualS$, and let \(\p{x_\star, y_\star} \in \PrimS\times \DualS\) satisfy \cref{eq:kkt}. Then we define the sequences $\p{F_k}_{k=0}^{\infty}$ and $\p{G_k}_{k=0}^{\infty}$ by
  \begin{align*}
    F_k = f\p{x_{k+1}} - f\p{x_\star} + \inpr{L^* y_\star}{x_{k+1} - x_\star}
  \end{align*}
  and
  \begin{align*}
    G_k = g^*\p{y_{k+1}} - g^* \p{y_\star} - \inpr{L x_\star}{y_{k+1} - y_\star},
  \end{align*}
  respectively. 
\end{definition}

Next, we present two lemmas concerning $F_k$ and $G_k$.

\begin{lemma}\label{lem:FkGk_properties}
    Suppose that $\p{F_k}_{k=0}^{\infty}$ and $\p{G_k}_{k=0}^{\infty}$ are given as in Definition~\ref{def:FkGk}. Then
  \begin{enumerate}[label={\roman{enumi})}]
    \item\label{lem3:1} $F_k,G_k\geq 0$,
    \item\label{lem3:2} $F_k+G_k<\infty$, and
    \item\label{lem3:3} $F_k+G_k=\mathcal{D}_{x_\star,y_\star}(x_{k+1},y_{k+1})$.
  \end{enumerate}
\end{lemma}
\begin{proof}
\textit{\cref{lem3:1}} By \cref{eq:kkt:f,eq:subdifferential-definition}, we get that
  \begin{align*}
    f\p{x_{k+1}} \geq f\p{x_\star} - \inpr{L^* y_\star}{x_{k+1} - x_\star},
  \end{align*}
  which implies that $F_k$ is nonnegative.
  By \cref{eq:kkt:g,eq:subdifferential-definition}, we get that
  \begin{align*}
    g^*\p{y_{k+1}} \geq g^* \p{y_\star} + \inpr{L x_\star}{y_{k+1} - y_\star},
  \end{align*}
  which implies that $G_k$ is nonnegative.
  
  \textit{\cref{lem3:2}} By \cref{eq:f_of_prox_finite} and \cref{eq:kkt}, we conclude that all terms defining $F_k$ and $G_k$ are finite. Hence $F_k+G_k<\infty$.
  
  \textit{\cref{lem3:3}} We have
  \begin{align*}
    F_k+G_k
    &= \funcFirst\p{x_{k+1}} + \inner{y_\star}{\linOp x_{k+1}} - \funcSecond^* \p{y_\star} - (\funcFirst\p{x_\star} + \inner{y_{k+1}}{\linOp x_\star} - \funcSecond^* \p{y_{k+1}}   )\\&= \Lagrangian{x_{k+1}}{y_\star} - \Lagrangian{x_\star}{y_{k+1}}.\qedhere
\end{align*}
\end{proof}
We call \(F_k + G_k\) a \emph{primal--dual gap} at iteration \(k\). Note that the primal--dual gap depends on the particular choice of KKT point $(x_\star,y_\star)$ and our analysis is valid for each such choice. 

Before introducing the next lemma related to $F_k$ and $G_k$, we conclude, using \cref{eq:prox-subdifferential}, that
the iterative procedure in \cref{eq:CP-iteration} is equivalent to the following set of inclusions:
\begin{align}
  \frac{1}{\tau } \p{x_k - x_{k+1}} - L^* y_k &\in \partial f \p{x_{k+1}}, \label{eq:CP-inclusion:f} \\
  \frac{1}{\sigma } \p{y_k - y_{k+1}} + L \p{x_{k+1} + \theta \p{x_{k+1} - x_k}} &\in \partial g^* \p{y_{k+1}}. \label{eq:CP-inclusion:g}
\end{align}

\begin{lemma}\label{lem:interpolations}
  Suppose that $\p{F_k}_{k=0}^{\infty}$ and $\p{G_k}_{k=0}^{\infty}$ are given as in Definition~\ref{def:FkGk}. Then
  \begin{align*}
    G_k &\leq \frac{1}{\sigma } \inpr{y_k - y_{k+1}}{y_{k+1} - y_\star} + \inpr{L x_{k+1} - L x_\star}{y_{k+1} - y_\star} \\&\qquad + \theta \inpr{L x_{k+1} - L x_k}{y_{k+1} - y_\star}, \\
    F_{k+1} &\leq \frac{1}{\tau } \inpr{x_{k+1} - x_{k+2}}{x_{k+2} - x_\star} + \inpr{y_\star - y_{k+1}}{L x_{k+2} - L x_\star}, \\
    F_{k+1} - F_k &\leq  - \frac{1}{\tau } \norm{x_{k+2} - x_{k+1}}^2 + \inpr{y_\star - y_{k+1}}{L x_{k+2} - L x_{k+1}}, \\
    F_k - F_{k+1} &\leq \frac{1}{\tau } \inpr{x_k - x_{k+1}}{x_{k+1} - x_{k+2}} + \inpr{y_\star - y_k}{L x_{k+1} - L x_{k+2}}.
  \end{align*}
\end{lemma}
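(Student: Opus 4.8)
The plan is to obtain all four inequalities by the same mechanism: each is a direct consequence of one of the subgradient inclusions \eqref{eq:CP-inclusion:f}--\eqref{eq:CP-inclusion:g}, the defining subgradient inequality \eqref{eq:subdifferential-definition}, and the adjoint identity $\inpr{L^* v}{w} = \inpr{v}{Lw}$. In each case I pick the inclusion at the appropriate index, which supplies an explicit element of $\partial f$ or $\partial g^*$ at a specific iterate, apply \eqref{eq:subdifferential-definition} at a well-chosen test point $z$, and substitute the resulting scalar inequality into the definition of the relevant $F$- or $G$-term, simplifying by bilinearity of the inner product.

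For the bound on $G_k$, the inclusion \eqref{eq:CP-inclusion:g} is a subgradient of $g^*$ at $y_{k+1}$; applying \eqref{eq:subdifferential-definition} with $z = y_\star$ gives an upper bound on $g^*(y_{k+1}) - g^*(y_\star)$, and subtracting $\inpr{Lx_\star}{y_{k+1}-y_\star}$ to form $G_k$ and expanding the single inner product into its three summands reproduces the claimed right-hand side. The bound on $F_{k+1}$ is the mirror image: I use \eqref{eq:CP-inclusion:f} at index $k+1$ (a subgradient of $f$ at $x_{k+2}$) tested at $z = x_\star$, add $\inpr{L^* y_\star}{x_{k+2}-x_\star}$ to build $F_{k+1}$, and combine the two $L^*$-terms into $\inpr{y_\star - y_{k+1}}{Lx_{k+2}-Lx_\star}$ via the adjoint identity.

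The two difference inequalities are where the only real care is needed, since they are tested against neighboring iterates and rely on the correct, asymmetric choice of inclusion. For $F_{k+1}-F_k$ I would bound $f(x_{k+2}) - f(x_{k+1})$ from above using the subgradient of $f$ at $x_{k+2}$ (inclusion \eqref{eq:CP-inclusion:f} at index $k+1$) evaluated at $z = x_{k+1}$; the diagonal term then contributes $-\frac{1}{\tau}\norm{x_{k+2}-x_{k+1}}^2$ and the remaining linear terms collapse to $\inpr{y_\star - y_{k+1}}{Lx_{k+2}-Lx_{k+1}}$. For $F_k - F_{k+1}$ I would instead bound $f(x_{k+1}) - f(x_{k+2})$ using the subgradient of $f$ at $x_{k+1}$ (inclusion \eqref{eq:CP-inclusion:f} at index $k$) evaluated at $z = x_{k+2}$; here the off-diagonal product $\frac{1}{\tau}\inpr{x_k - x_{k+1}}{x_{k+1}-x_{k+2}}$ survives and the linear terms reorganize into $\inpr{y_\star - y_k}{Lx_{k+1}-Lx_{k+2}}$.

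The main obstacle is therefore purely bookkeeping rather than analytic: one must use the inclusion at index $k+1$ for the third inequality but at index $k$ for the fourth, and keep the signs straight when passing between $\inpr{a}{b}$ and $\inpr{b}{a}$ and when applying the adjoint identity. Once the correct inclusion and test point are fixed, each inequality reduces to a one-line convexity estimate followed by routine algebra, so no deeper argument is required.
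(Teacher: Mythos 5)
Your proposal is correct and matches the paper's proof exactly: each inequality uses the same inclusion (\eqref{eq:CP-inclusion:g} at $y_{k+1}$ for $G_k$; \eqref{eq:CP-inclusion:f} at index $k+1$ for the $F_{k+1}$ and $F_{k+1}-F_k$ bounds; at index $k$ for $F_k - F_{k+1}$) with the same test points ($y_\star$, $x_\star$, $x_{k+1}$, and $x_{k+2}$, respectively), followed by the same adjoint and bilinearity bookkeeping. No gaps; this is the paper's argument.
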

\begin{proof}
  By \cref{eq:CP-inclusion:g,eq:subdifferential-definition},
  \begin{align*}
    G_k
    &= g^*\p{y_{k+1}} - g^* \p{y_\star} - \inpr{L x_\star}{y_{k+1} - y_\star} \\
    &\leq \inpr{\frac{1}{\sigma } \p{y_k - y_{k+1}} + L \p{x_{k+1} + \theta \p{x_{k+1} - x_k}}}{y_{k+1} - y_\star} - \inpr{L x_\star}{y_{k+1} - y_\star} \\
    &= \frac{1}{\sigma } \inpr{y_k - y_{k+1}}{y_{k+1} - y_\star} + \inpr{L x_{k+1} - L x_\star}{y_{k+1} - y_\star} + \theta \inpr{L x_{k+1} - L x_k}{y_{k+1} - y_\star}.
  \end{align*}
  By \cref{eq:CP-inclusion:f,eq:subdifferential-definition},
  \begin{align*}
    F_{k+1}
    &= f\p{x_{k+2}} - f\p{x_\star} + \inpr{L^* y_\star}{x_{k+2} - x_\star} \\
    &\leq \inpr{\frac{1}{\tau } \p{x_{k+1} - x_{k+2}} - L^* y_{k+1}}{x_{k+2} - x_\star} + \inpr{L^* y_\star}{x_{k+2} - x_\star} \\
    &= \frac{1}{\tau } \inpr{x_{k+1} - x_{k+2}}{x_{k+2} - x_\star} + \inpr{y_\star - y_{k+1}}{L x_{k+2} - L x_\star}
  \end{align*}
  and
  \begin{align*}
    F_{k+1} - F_k
    &= f\p{x_{k+2}} - f\p{x_{k+1}} + \inpr{L^* y_\star}{x_{k+2} - x_{k+1}} \\
    &\leq \inpr{\frac{1}{\tau } \p{x_{k+1} - x_{k+2}} - L^* y_{k+1}}{x_{k+2} - x_{k+1}} + \inpr{L^* y_\star}{x_{k+2} - x_{k+1}} \\
    &= - \frac{1}{\tau } \norm{x_{k+2} - x_{k+1}}^2 + \inpr{y_\star - y_{k+1}}{L x_{k+2} - L x_{k+1}}
  \end{align*}
  and
  \begin{align*}
    F_k - F_{k+1}
    &= f\p{x_{k+1}} - f\p{x_{k+2}} + \inpr{L^* y_\star}{x_{k+1} - x_{k+2}} \\
    &\leq \frac{1}{\tau } \inpr{x_k - x_{k+1}}{x_{k+1} - x_{k+2}} + \inpr{y_\star - y_k}{L x_{k+1} - L x_{k+2}}. \qedhere
  \end{align*}
\end{proof}

\section{Lyapunov analysis}
\label{sec:Lyapunov}

In this section, we provide a Lyapunov analysis of the Chambolle–Pock method, which serves as the basis for our convergence results. Before presenting this, we establish the nonnegativity of a certain coefficient that emerges in the Lyapunov analysis.
\begin{lemma}
    \label{lem:coeff_pos}
    Suppose that $\theta\geq {1}/{2}$ and $\tau ,\sigma >0$, and that they jointly satisfy $0<\tau \sigma \|L\|^2\leq{4}/{\p{1+2\theta}}$. Further suppose that at least one of $\theta\geq{1}/{2}$ and $\tau \sigma \|L\|^2\leq{4}/{\p{1+2\theta}}$ holds with strict inequality. Then $8 \theta - \tau  \sigma  \norm{L}^2 \p{4 \theta^2 + 1}$ is positive.
\end{lemma}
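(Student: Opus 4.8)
The plan is to eliminate $\tau\sigma\norm{L}^2$ using the step-size bound and reduce the claim to a one-variable inequality in $\theta$. Writing $t = \tau\sigma\norm{L}^2 > 0$, I first observe that the map $t \mapsto 8\theta - t\p{4\theta^2 + 1}$ is affine with strictly negative slope, since $4\theta^2 + 1 > 0$; hence it is strictly decreasing in $t$. This means the expression can be bounded from below by evaluating it at the largest admissible value $t = 4/\p{1+2\theta}$.

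Next I would compute this boundary value. A direct calculation over the common denominator $1 + 2\theta$ gives
\[
8\theta - \frac{4\p{4\theta^2+1}}{1+2\theta} = \frac{8\theta\p{1+2\theta} - 4\p{4\theta^2+1}}{1+2\theta} = \frac{4\p{2\theta-1}}{1+2\theta}.
\]
Because $\theta \geq 1/2$, the denominator $1 + 2\theta$ is positive and the numerator $4\p{2\theta - 1}$ is nonnegative, so this boundary value is nonnegative.

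To upgrade nonnegativity to strict positivity, I would use the hypothesis that at least one of the two inequalities is strict, splitting into two cases. If $t < 4/\p{1+2\theta}$, then strict monotonicity in $t$ yields $8\theta - t\p{4\theta^2+1} > 4\p{2\theta-1}/\p{1+2\theta} \geq 0$. If instead $t = 4/\p{1+2\theta}$, the hypothesis forces $\theta > 1/2$, so the boundary value $4\p{2\theta-1}/\p{1+2\theta}$ is itself strictly positive, and $8\theta - t\p{4\theta^2+1}$ equals it. In either case the quantity is positive.

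I do not expect any genuine obstacle here: the statement is an elementary algebraic fact once the step-size bound is substituted, and the monotonicity observation makes the substitution rigorous rather than merely plausible. The only point demanding care is the bookkeeping of strict versus non-strict inequalities, ensuring that the ``at least one strict'' hypothesis is correctly propagated through both cases so as to deliver strict positivity rather than mere nonnegativity.
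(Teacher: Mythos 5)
Your proof is correct and follows essentially the same route as the paper's: substitute the extremal value $\tau\sigma\norm{L}^2 = 4/\p{1+2\theta}$ to reduce to the one-variable bound $4\p{2\theta-1}/\p{1+2\theta} \geq 0$, then track which of the two inequalities is strict. The paper compresses your explicit monotonicity observation and case split into a single annotated chain of inequalities, but the content is identical.
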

\begin{proof}
    We have
    \begin{align*}
        8 \theta - \tau  \sigma  \norm{L}^2 \p{4 \theta^2 + 1}  &\geq 8 \theta -  \frac{4\p{4 \theta^2 + 1}}{1+2\theta} =\frac{8 \theta - 4}{1+2\theta} =4-\frac{8}{1+2\theta} \geq 0,
    \end{align*}
    where the first inequality is strict if $\tau \sigma \|L\|^2<{4}/{\p{1+2\theta}}$ and the last inequality is strict if $\theta>{1}/{2}$.
\end{proof}

\begin{lemma}[Lyapunov inequality]\label{lem:lyapunov-inequality}
  Suppose that \cref{ass:prob} holds. Let $(x_\star,y_\star)\in\PrimS\times\DualS$ be an arbitrary point satisfying \cref{eq:kkt}, let $\p{x_k}_{k=0}^{\infty}$ and $\p{y_k}_{k=0}^{\infty}$ be generated by \cref{eq:CP-iteration} with $\theta\geq {1}/{2}$ and $\tau ,\sigma >0$ satisfying $\tau \sigma \|L\|^2\leq{4}/{\p{1+2\theta}}$, and arbitrary \(\p{x_0, y_0} \in \PrimS \times \DualS\). Further, suppose that at least one of $\theta\geq{1}/{2}$ and $\tau \sigma \|L\|^2\leq{4}/{\p{1+2\theta}}$ holds with strict inequality. Moreover, for each nonnegative integer \(k\), let
  \begin{multline*}
    V_k = \theta F_k + \frac{1}{2 \tau } \norm{x_{k+1} - x_\star}^2 + \frac{1}{2 \sigma } \norm{y_k - y_\star + \sigma  \theta \p{L x_{k+1} - L x_k}}^2 \\
    + \frac{\theta}{2 \tau } \norm{x_{k+1} - x_k}^2 - \frac{\sigma  \p{4 \theta^2 + 1}}{16} \norm{L x_{k+1} - L x_k}^2
  \end{multline*}
  and $F_k$ and $G_k$ be as in \cref{def:FkGk}.
  Then, for each nonnegative integer $k$, $V_k\geq 0$ and
  \begin{align*}
    V_{k+1} - &\,V_k + F_k + G_k \leq  - \frac{1}{2 \sigma } \norm{y_{k+1} - y_k - \sigma  \p{\frac{1}{2} \p{L x_{k+1} - L x_{k+2}} - \theta \p{L x_k - L x_{k+1}}}}^2 \\
     &- \frac{8 \theta - \tau  \sigma  \norm{L}^2 \p{4 \theta^2 + 1}}{16 \tau } \norm{x_{k+2} - x_{k+1} - \frac{4 \p{1 - \tau \sigma  \theta \norm{L}^2}}{8 \theta - \tau  \sigma  \norm{L}^2 \p{4 \theta^2 + 1}} \p{x_{k+1} - x_k}}^2\\
    &- \frac{\p{4 \theta^2 - 1} \p{4 - \tau  \sigma  \norm{L}^2 \p{2 \theta + 1}} \p{4 - \tau  \sigma  \norm{L}^2 \p{2 \theta - 1}}}{16 \tau  \p{8 \theta - \tau  \sigma  \norm{L}^2 \p{4 \theta^2 + 1}}} \norm{x_{k+1} - x_k}^2,
  \end{align*}
  where the latter inequality is called a \emph{Lyapunov inequality}.
\end{lemma}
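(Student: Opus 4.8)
The plan is to prove the two assertions of the lemma — nonnegativity of $V_k$ and the Lyapunov inequality — separately, with the second being the substantial part. For nonnegativity I would simply discard the manifestly nonnegative contributions: since $\theta\geq 1/2>0$ and $F_k\geq 0$ by \cref{lem:FkGk_properties}, the term $\theta F_k$ is nonnegative, and so are $\frac{1}{2\tau}\norm{x_{k+1}-x_\star}^2$ and $\frac{1}{2\sigma}\norm{y_k-y_\star+\sigma\theta\p{Lx_{k+1}-Lx_k}}^2$. For the remaining indefinite pair I would use $\norm{Lx_{k+1}-Lx_k}^2\leq\norm{L}^2\norm{x_{k+1}-x_k}^2$ to obtain $\frac{\theta}{2\tau}\norm{x_{k+1}-x_k}^2-\frac{\sigma\p{4\theta^2+1}}{16}\norm{Lx_{k+1}-Lx_k}^2\geq\frac{8\theta-\tau\sigma\norm{L}^2\p{4\theta^2+1}}{16\tau}\norm{x_{k+1}-x_k}^2$, which is nonnegative by \cref{lem:coeff_pos}. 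Hence $V_k\geq 0$.

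For the Lyapunov inequality, the strategy is to expand $V_{k+1}-V_k$, isolate its function-value content, and replace that content by the bounds of \cref{lem:interpolations}. Because $V_k$ carries the term $\theta F_k$, the difference $V_{k+1}-V_k$ contributes $\theta\p{F_{k+1}-F_k}$, so the full function-value content of $V_{k+1}-V_k+F_k+G_k$ equals $\theta F_{k+1}+\p{1-\theta}F_k+G_k$. I would bound this from above by the nonnegative combination of the four inequalities of \cref{lem:interpolations} with weight $1$ on the first (for $G_k$), weight $1$ on the second (for $F_{k+1}$), weight $\theta$ on the third (for $F_{k+1}-F_k$), and weight $1$ on the fourth (for $F_k-F_{k+1}$); indeed $F_{k+1}+\theta\p{F_{k+1}-F_k}+\p{F_k-F_{k+1}}+G_k=\theta F_{k+1}+\p{1-\theta}F_k+G_k$, and all weights are nonnegative since $\theta\geq 1/2$. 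These convexity inequalities, together with a single operator-norm estimate below, are the only genuine inequalities used; everything else is an algebraic identity.

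The decisive feature of these weights is that every term involving the saddle point $\p{x_\star,y_\star}$ cancels. Using $\norm{x_{k+2}-x_\star}^2-\norm{x_{k+1}-x_\star}^2=2\inpr{x_{k+2}-x_{k+1}}{x_{k+2}-x_\star}-\norm{x_{k+2}-x_{k+1}}^2$ and treating the dual squared term analogously, the $x_\star$-linear terms cancel against the weight-one copy of the second inequality, the $y_\star$-linear terms cancel against the first, third, and fourth inequalities, and the bilinear $\inpr{Lx_\star}{y_\star}$ terms cancel between the first and second; the pure squares $\norm{x_\star}^2$ and $\norm{y_\star}^2$ cancel within the telescoping difference itself. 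I would verify this by collecting the coefficients of $x_\star$, of $y_\star$, and of $\inpr{Lx_\star}{y_\star}$ and checking that each vanishes. What then survives is a quadratic expression in the increments $x_{k+1}-x_k$, $x_{k+2}-x_{k+1}$, and $y_{k+1}-y_k$, together with the $L$-images of the two primal increments.

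It remains to reorganize this quadratic expression into the three squared terms on the right-hand side. I would first complete the dual square $-\frac{1}{2\sigma}\norm{y_{k+1}-y_k-\sigma\p{\frac{1}{2}\p{Lx_{k+1}-Lx_{k+2}}-\theta\p{Lx_k-Lx_{k+1}}}}^2$, which absorbs all terms containing $y_{k+1}-y_k$ as well as the cross term between the $L$-images of the two primal increments. The telescoping of the last term of $V_k$ then leaves a lone positive multiple of $\norm{Lx_{k+1}-Lx_k}^2$, and precisely here I would apply $\norm{Lx_{k+1}-Lx_k}^2\leq\norm{L}^2\norm{x_{k+1}-x_k}^2$ to produce the factor $\tau\sigma\norm{L}^2$ appearing in the remaining coefficients. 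The residual quadratic in $x_{k+1}-x_k$ and $x_{k+2}-x_{k+1}$ is completed into the second squared term, whose leading coefficient $8\theta-\tau\sigma\norm{L}^2\p{4\theta^2+1}$ is positive by \cref{lem:coeff_pos}, and the leftover multiple of $\norm{x_{k+1}-x_k}^2$ is exactly the third term, whose coefficient factors as a nonnegative quantity under the hypotheses $\theta\geq 1/2$ and $\tau\sigma\norm{L}^2\leq 4/\p{1+2\theta}$. I expect the main obstacle to be exactly this final bookkeeping: carrying the numerous inner-product and $L$-image terms through the cancellations and confirming that completing the dual square plus the single operator-norm estimate reproduce precisely the stated coefficients — a mechanical but highly error-prone computation, which is presumably why the methodology of \cite{Upadhyaya2023Lyapunov} was employed to derive and organize the inequality.
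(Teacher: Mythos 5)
Your argument for $V_k\geq 0$ is exactly the paper's, and your overall strategy (weighted combination of \cref{lem:interpolations}, cancellation of the saddle-point terms, telescoping identities, a single operator-norm estimate, completion of squares) is the paper's as well. The gap is in the decisive choice of weights. You take weights $(1,1,\theta,1)$ on the four inequalities of \cref{lem:interpolations}. The paper's proof instead uses
\begin{equation*}
\theta F_{k+1}-\theta F_k+F_k+G_k=\p{\theta-\tfrac{1}{2}}\p{F_{k+1}-F_k}+F_{k+1}+\tfrac{1}{2}\p{F_k-F_{k+1}}+G_k,
\end{equation*}
i.e., weights $(1,1,\theta-\tfrac{1}{2},\tfrac{1}{2})$. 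Both are valid identities for the same left-hand side, but the resulting upper bounds are different quadratic forms: yours exceeds the paper's by exactly half the sum of the right-hand sides of the third and fourth inequalities, namely by
\begin{equation*}
\Delta_k=-\frac{1}{2\tau}\norm{x_{k+2}-x_{k+1}}^2+\frac{1}{2\tau}\inpr{x_k-x_{k+1}}{x_{k+1}-x_{k+2}}+\frac{1}{2}\inpr{y_k-y_{k+1}}{Lx_{k+2}-Lx_{k+1}},
\end{equation*}
a sign-indefinite quantity. A concrete symptom: with your weights the term linear in $y_{k+1}-y_k$ comes out as $\inpr{y_{k+1}-y_k}{\p{Lx_{k+1}-Lx_{k+2}}+\theta\p{Lx_{k+1}-Lx_k}}$, i.e., with coefficient $1$ rather than $\tfrac{1}{2}$ on $Lx_{k+1}-Lx_{k+2}$, so completing the square around the dual quadratic appearing in the statement cannot absorb all $y_{k+1}-y_k$ terms, contrary to your claim; a residual $\tfrac{1}{2}\inpr{y_{k+1}-y_k}{Lx_{k+1}-Lx_{k+2}}$ survives.

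This is not a bookkeeping issue that more careful algebra could repair, because your intermediate bound is not dominated by the lemma's right-hand side. Take $\PrimS=\DualS=\Real$, $L=1$, $\theta=\tau=\sigma=1$ (so $\tau\sigma\norm{L}^2=1<4/3$), and increments $x_{k+2}-x_{k+1}=1$, $x_{k+1}-x_k=2$, $y_{k+1}-y_k=0$. In this scalar setting the operator-norm step has zero slack and the paper's intermediate bound coincides exactly with the stated right-hand side (both equal $-33/16$ here), while yours equals it plus $\Delta_k=\tfrac{1}{2}>0$. Hence no sequence of square completions and norm estimates can carry your weighted combination to the stated inequality; the proof requires the weights $\theta-\tfrac{1}{2}$ and $\tfrac{1}{2}$ on the third and fourth inequalities, and the nonnegativity of the weight $\theta-\tfrac{1}{2}$ is precisely where the hypothesis $\theta\geq\tfrac{1}{2}$ enters the combination step.
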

\begin{proof}
We start by proving nonnegativity of $V_k$. Since $\theta\geq {1}/{2}$, $F_k\geq 0$ by \cref{lem:FkGk_properties}, and $\tau ,\sigma >0$, we conclude that $V_k\geq 0$ since
\begin{multline*}
  \frac{\theta}{2 \tau } \norm{x_{k+1} - x_k}^2 - \frac{\sigma  \p{4 \theta^2 + 1}}{16} \norm{L x_{k+1} - L x_k}^2 \\
  \begin{aligned}
    &\geq \frac{\theta}{2 \tau } \norm{x_{k+1} - x_k}^2 - \frac{\sigma  \|L\|^2\p{4 \theta^2 + 1}}{16} \norm{x_{k+1} - x_k}^2 \\
    &= \frac{1}{\tau }\p{\frac{8\theta- \tau \sigma  \|L\|^2\p{4 \theta^2 + 1}}{16}}\norm{x_{k+1} - x_k}^2 \geq 0,
  \end{aligned}
\end{multline*}
  where the last inequality follows from \cref{lem:coeff_pos}.

We next prove the Lyapunov inequality.
  By \cref{lem:interpolations}, we have
  \begin{align*}
    &\theta F_{k+1} - \theta F_k + F_k + G_k \\  
    &\quad = \p{\theta - \frac{1}{2}} \p{F_{k+1} - F_k} + F_{k+1} + \frac{1}{2} \p{F_k - F_{k+1}} + G_k \\
    &\quad 
    \begin{aligned}
        &\leq \p{\theta - \frac{1}{2}} \p{ - \frac{1}{\tau } \norm{x_{k+2} - x_{k+1}}^2 + \inpr{y_\star - y_{k+1}}{L x_{k+2} - L x_{k+1}}} \\
        &\quad + \frac{1}{\tau } \inpr{x_{k+1} - x_{k+2}}{x_{k+2} - x_\star} + \inpr{y_\star - y_{k+1}}{L x_{k+2} - L x_\star} \\
        &\quad + \frac{1}{2} \p{\frac{1}{\tau } \inpr{x_k - x_{k+1}}{x_{k+1} - x_{k+2}} + \inpr{y_\star - y_k}{L x_{k+1} - L x_{k+2}}} \\
        &\quad + \frac{1}{\sigma } \inpr{y_k - y_{k+1}}{y_{k+1} - y_\star} + \inpr{L x_{k+1} - L x_\star}{y_{k+1} - y_\star} \\
        &\quad + \theta \inpr{L x_{k+1} - L x_k}{y_{k+1} - y_\star} 
    \end{aligned} \\
    &\quad 
    \begin{aligned}
      &=- \frac{\theta}{\tau } \norm{x_{k+2} - x_{k+1}}^2 + \theta \inpr{y_\star - y_{k+1}}{L x_{k+2} - 2 L x_{k+1} + L x_k} \\
      &\quad + \frac{1}{2 \tau } \norm{x_{k+2} - x_{k+1}}^2 + \frac{1}{2 \tau } \inpr{x_k - x_{k+1}}{x_{k+1} - x_{k+2}} \\
      &\quad + \frac{1}{\tau } \inpr{x_{k+1} - x_{k+2}}{x_{k+2} - x_\star} + \inpr{y_\star - y_{k+1}}{L x_{k+2} - L x_{k+1}} \\
      &\quad + \frac{1}{2} \inpr{2 y_\star - y_k - y_{k+1}}{L x_{k+1} - L x_{k+2}} \\
      &\quad + \frac{1}{\sigma } \inpr{y_k - y_{k+1}}{y_{k+1} - y_\star}.
    \end{aligned}
  \end{align*}
  Moreover,
  \begin{multline*}
    \frac{1}{2 \tau } \norm{x_{k+2} - x_\star}^2 - \frac{1}{2 \tau } \norm{x_{k+1} - x_\star}^2 \\
    = \frac{1}{2 \tau } \norm{x_{k+2} - x_{k+1}}^2 + \frac{1}{\tau } \inpr{x_{k+2} - x_{k+1}}{x_{k+1} - x_\star}
  \end{multline*}
  and
  \begin{multline*}
    \frac{1}{2 \sigma } \norm{y_{k+1} - y_\star + \sigma  \theta \p{L x_{k+2} - L x_{k+1}}}^2 - \frac{1}{2 \sigma } \norm{y_k - y_\star + \sigma  \theta \p{L x_{k+1} - L x_k}}^2 \\
    = \frac{1}{2 \sigma } \norm{y_{k+1} - y_k}^2 + \frac{1}{\sigma } \inpr{y_{k+1} - y_k}{y_k - y_\star} \\
    + \frac{\sigma  \theta^2}{2} \norm{L x_{k+2} - L x_{k+1}}^2 - \frac{\sigma  \theta^2}{2} \norm{L x_{k+1} - L x_k}^2 \\
    + \theta \inpr{y_{k+1} - y_\star}{L x_{k+2} - L x_{k+1}} - \theta \inpr{y_k - y_\star}{L x_{k+1} - L x_k}.
  \end{multline*}
  Hence,
  \begin{align*}
      & V_{k+1} - V_k + F_k + G_k \\
      & \quad 
      \begin{aligned}
        &= \theta F_{k+1} - \theta F_k + F_k + G_k + \frac{1}{2 \tau } \norm{x_{k+2} - x_\star}^2 - \frac{1}{2 \tau } \norm{x_{k+1} - x_\star}^2 \\
        &\quad + \frac{1}{2 \sigma } \norm{y_{k+1} - y_\star + \sigma  \theta \p{L x_{k+2} - L x_{k+1}}}^2 - \frac{1}{2 \sigma } \norm{y_k - y_\star + \sigma  \theta \p{L x_{k+1} - L x_k}}^2 \\
        &\quad + \frac{\theta}{2 \tau } \norm{x_{k+2} - x_{k+1}}^2 - \frac{\sigma  \p{4 \theta^2 + 1}}{16} \norm{L x_{k+2} - L x_{k+1}}^2 \\
        &\quad - \frac{\theta}{2 \tau } \norm{x_{k+1} - x_k}^2 + \frac{\sigma  \p{4 \theta^2 + 1}}{16} \norm{L x_{k+1} - L x_k}^2 
      \end{aligned} \\
      & \quad 
      \begin{aligned}
          &\leq  - \frac{\theta}{\tau } \norm{x_{k+2} - x_{k+1}}^2 + \theta \inpr{y_\star - y_{k+1}}{L x_{k+2} - 2 L x_{k+1} + L x_k} \\
          &\qquad + \frac{1}{2 \tau } \norm{x_{k+2} - x_{k+1}}^2 + \frac{1}{2 \tau } \inpr{x_k - x_{k+1}}{x_{k+1} - x_{k+2}} \\
          &\qquad + \frac{1}{\tau } \inpr{x_{k+1} - x_{k+2}}{x_{k+2} - x_\star} + \inpr{y_\star - y_{k+1}}{L x_{k+2} - L x_{k+1}} \\
          &\quad + \frac{1}{2} \inpr{2 y_\star - y_k - y_{k+1}}{L x_{k+1} - L x_{k+2}} \\
          &\quad + \frac{1}{\sigma } \inpr{y_k - y_{k+1}}{y_{k+1} - y_\star} \\
          &\quad + \frac{1}{2 \tau } \norm{x_{k+2} - x_{k+1}}^2 + \frac{1}{\tau } \inpr{x_{k+2} - x_{k+1}}{x_{k+1} - x_\star} \\
          &\qquad + \frac{1}{2 \sigma } \norm{y_{k+1} - y_k}^2 + \frac{1}{\sigma } \inpr{y_{k+1} - y_k}{y_k - y_\star} \\
          &\quad + \frac{\sigma  \theta^2}{2} \norm{L x_{k+2} - L x_{k+1}}^2 - \frac{\sigma  \theta^2}{2} \norm{L x_{k+1} - L x_k}^2 \\
          &\qquad + \theta \inpr{y_{k+1} - y_\star}{L x_{k+2} - L x_{k+1}} - \theta \inpr{y_k - y_\star}{L x_{k+1} - L x_k} \\
          &\quad + \frac{\theta}{2 \tau } \norm{x_{k+2} - x_{k+1}}^2 - \frac{\sigma  \p{4 \theta^2 + 1}}{16} \norm{L x_{k+2} - L x_{k+1}}^2 \\
          &\quad - \frac{\theta}{2 \tau } \norm{x_{k+1} - x_k}^2 + \frac{\sigma  \p{4 \theta^2 + 1}}{16} \norm{L x_{k+1} - L x_k}^2 \\
      \end{aligned} \\
      & \quad 
      \begin{aligned}
            &=  - \frac{1}{2 \sigma } \norm{y_{k+1} - y_k}^2 + \inpr{y_{k+1} - y_k}{\frac{1}{2} \p{L x_{k+1} - L x_{k+2}} - \theta \p{L x_k - L x_{k+1}}} \\
            &\quad - \frac{\theta}{2 \tau } \norm{x_{k+2} - x_{k+1}}^2 + \frac{1}{2 \tau } \inpr{x_k - x_{k+1}}{x_{k+1} - x_{k+2}} - \frac{\theta}{2 \tau } \norm{x_{k+1} - x_k}^2 \\
            & \quad + \frac{\sigma  \p{4 \theta^2 - 1}}{16} \norm{L x_{k+2} - L x_{k+1}}^2 - \frac{\sigma  \p{4 \theta^2 - 1}}{16} \norm{L x_{k+1} - L x_k}^2.
      \end{aligned}
  \end{align*}
  Completing the square yields
  \begin{align*}
    & V_{k+1} - V_k + F_k + G_k \\
    & \quad 
    \begin{aligned}
        &\leq  -\frac{1}{2 \sigma } \norm{y_{k+1} - y_k - \sigma  \p{\frac{1}{2} \p{L x_{k+1} - L x_{k+2}} - \theta \p{L x_k - L x_{k+1}}}}^2 \\
        &\quad + \frac{\sigma }{2} \norm{\frac{1}{2} \p{L x_{k+1} - L x_{k+2}} - \theta \p{L x_k - L x_{k+1}}}^2 \\
        &\quad - \frac{\theta}{2 \tau } \norm{x_{k+2} - x_{k+1}}^2 + \frac{1}{2 \tau } \inpr{x_k - x_{k+1}}{x_{k+1} - x_{k+2}} - \frac{\theta}{2 \tau } \norm{x_{k+1} - x_k}^2 \\
        &\quad + \frac{\sigma  \p{4 \theta^2 - 1}}{16} \norm{L x_{k+2} - L x_{k+1}}^2 - \frac{\sigma  \p{4 \theta^2 - 1}}{16} \norm{L x_{k+1} - L x_k}^2 
    \end{aligned} \\
    & \quad 
    \begin{aligned}
        &= - \frac{1}{2 \sigma } \norm{y_{k+1} - y_k - \sigma  \p{\frac{1}{2} \p{L x_{k+1} - L x_{k+2}} - \theta \p{L x_k - L x_{k+1}}}}^2 \\
        &\quad - \frac{\sigma  \theta}{2} \inpr{L x_{k+1} - L x_{k+2}}{L x_k - L x_{k+1}} \\
        &\quad - \frac{\theta}{2 \tau } \norm{x_{k+2} - x_{k+1}}^2 + \frac{1}{2 \tau } \inpr{x_k - x_{k+1}}{x_{k+1} - x_{k+2}} - \frac{\theta}{2 \tau } \norm{x_{k+1} - x_k}^2 \\
        &\quad + \frac{\sigma  \p{4 \theta^2 + 1}}{16} \norm{L x_{k+2} - L x_{k+1}}^2 + \frac{\sigma  \p{4 \theta^2 + 1}}{16} \norm{L x_{k+1} - L x_k}^2
    \end{aligned} \\
    & \quad 
    \begin{aligned}
        &=- \frac{1}{2 \sigma } \norm{y_{k+1} - y_k - \sigma  \p{\frac{1}{2} \p{L x_{k+1} - L x_{k+2}} - \theta \p{L x_k - L x_{k+1}}}}^2 \\
        &\quad + \frac{\sigma  \p{4 \theta^2 + 1}}{16} \norm{L x_{k+2} - L x_{k+1} + \frac{4 \theta}{4 \theta^2 + 1} \p{L x_k - L x_{k+1}}}^2 \\
        &\quad - \frac{\theta}{2 \tau } \norm{x_{k+2} - x_{k+1}}^2 + \frac{1}{2 \tau } \inpr{x_k - x_{k+1}}{x_{k+1} - x_{k+2}} - \frac{\theta}{2 \tau } \norm{x_{k+1} - x_k}^2 \\
        &\quad + \frac{\sigma  \p{4 \theta^2 - 1}^2}{16 \p{4 \theta^2 + 1}} \norm{L x_{k+1} - L x_k}^2.
    \end{aligned} 
  \end{align*}
  Using the Lipschitz continuity of \(L\) and that $\theta\geq 1/2$ give
  \begin{align*}
    & V_{k+1} - V_k + F_k + G_k \\
    &\quad 
    \begin{aligned}
        &\leq  - \frac{1}{2 \sigma } \norm{y_{k+1} - y_k - \sigma  \p{\frac{1}{2} \p{L x_{k+1} - L x_{k+2}} - \theta \p{L x_k - L x_{k+1}}}}^2 \\
        &\quad + \frac{\sigma  \norm{L}^2 \p{4 \theta^2 + 1}}{16} \norm{x_{k+2} - x_{k+1} + \frac{4 \theta}{4 \theta^2 + 1} \p{x_k - x_{k+1}}}^2 \\
        &\quad - \frac{\theta}{2 \tau } \norm{x_{k+2} - x_{k+1}}^2 + \frac{1}{2 \tau } \inpr{x_k - x_{k+1}}{x_{k+1} - x_{k+2}} - \frac{\theta}{2 \tau } \norm{x_{k+1} - x_k}^2 \\
        &\quad + \frac{\sigma  \norm{L}^2 \p{4 \theta^2 - 1}^2}{16 \p{4 \theta^2 + 1}} \norm{x_{k+1} - x_k}^2
    \end{aligned} \\
    &\quad 
    \begin{aligned}
        &= - \frac{1}{2 \sigma } \norm{y_{k+1} - y_k - \sigma  \p{\frac{1}{2} \p{L x_{k+1} - L x_{k+2}} - \theta \p{L x_k - L x_{k+1}}}}^2 \\
        &\quad - \frac{8 \theta - \tau  \sigma  \norm{L}^2 \p{4 \theta^2 + 1}}{16 \tau } \norm{x_{k+2} - x_{k+1}}^2 \\
        &\quad + \frac{1 - \tau \sigma  \theta \norm{L}^2}{2 \tau } \inpr{x_{k+2} - x_{k+1}}{x_{k+1} - x_k} \\
        &\quad - \frac{8 \theta - \tau  \sigma  \norm{L}^2 \p{4 \theta^2 + 1}}{16 \tau } \norm{x_{k+1} - x_k}^2 
    \end{aligned} \\
    &\quad 
    \begin{aligned}
        &=- \frac{1}{2 \sigma } \norm{y_{k+1} - y_k - \sigma  \p{\frac{1}{2} \p{L x_{k+1} - L x_{k+2}} - \theta \p{L x_k - L x_{k+1}}}}^2 \\
        &\quad - \frac{8 \theta - \tau  \sigma  \norm{L}^2 \p{4 \theta^2 + 1}}{16 \tau } \norm{x_{k+2} - x_{k+1} - \frac{4 \p{1 - \tau \sigma  \theta \norm{L}^2}}{8 \theta - \tau  \sigma  \norm{L}^2 \p{4 \theta^2 + 1}} \p{x_{k+1} - x_k}}^2 \\
        &\quad - \frac{\p{4 \theta^2 - 1} \p{4 - \tau  \sigma  \norm{L}^2 \p{2 \theta + 1}} \p{4 - \tau  \sigma  \norm{L}^2 \p{2 \theta - 1}}}{16 \tau  \p{8 \theta - \tau  \sigma  \norm{L}^2 \p{4 \theta^2 + 1}}} \norm{x_{k+1} - x_k}^2,
    \end{aligned} 
  \end{align*}
   where the last completion of squares equality holds since $\p{8 \theta - \tau  \sigma  \norm{L}^2 \p{4 \theta^2 + 1}}>0$ by \cref{lem:coeff_pos}.\qedhere
\end{proof}

\section{Theorem proofs}
\label{sec:thm_proofs}

In this section, we use \cref{lem:lyapunov-inequality} to prove \cref{thm:pd,thm:sequence}, i.e., we prove ergodic convergence of the primal--dual gap and weak convergence of the sequences \(\p{x_k}_{k=0}^{\infty}\) and \(\p{y_k}_{k=0}^{\infty}\) to a KKT point. Given the Lyapunov inequality, the arguments are standard. However, before we prove these results, we state the following lemma on nonnegativity of a coefficient in the Lyapunov inequality.
\begin{lemma}
    \label{lem:coeff_nonneg}
    Suppose that $\theta\geq {1}/{2}$ and $\tau ,\sigma >0$ and that they jointly satisfy $0<\tau \sigma \|L\|^2\leq{4}/{\p{1+2\theta}}$. Further suppose that at least one of $\theta\geq{1}/{2}$ and $\tau \sigma \|L\|^2\leq 4/\p{1+2\theta}$ holds with strict inequality. Then 
    \begin{align*}
    \frac{\p{4 \theta^2 - 1} \p{4 - \tau  \sigma  \norm{L}^2 \p{2 \theta + 1}} \p{4 - \tau  \sigma  \norm{L}^2 \p{2 \theta - 1}}}{16 \tau  \p{8 \theta - \tau  \sigma  \norm{L}^2 \p{4 \theta^2 + 1}}}\geq 0
    \end{align*}
    with strict inequality if $\tau \sigma \|L\|^2<{4}/{\p{1+2\theta}}$ and $\theta>{1}/{2}$ and equality if $\tau \sigma \|L\|^2={4}/{\p{1+2\theta}}$ or $\theta=1/2$.
\end{lemma}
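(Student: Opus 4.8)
The plan is to treat the expression as a ratio of products and determine the sign of each factor separately, since every factor is either a constant, a simple function of $\theta$, or affine in $\tau\sigma\norm{L}^2$, and the hypotheses pin down each of their signs. First I would factor the leading term as $4\theta^2 - 1 = \p{2\theta - 1}\p{2\theta + 1}$ and note that $\theta \geq 1/2$ makes both factors nonnegative (with $2\theta + 1 > 0$ always), so $4\theta^2 - 1 \geq 0$ with equality precisely when $\theta = 1/2$.

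Next I would handle the two factors in the numerator that depend on the step sizes. The factor $4 - \tau\sigma\norm{L}^2\p{2\theta + 1}$ is nonnegative exactly because $\tau\sigma\norm{L}^2 \leq 4/\p{1 + 2\theta}$ rearranges to $\tau\sigma\norm{L}^2\p{2\theta + 1} \leq 4$, and this is an equality if and only if the step-size bound is tight. The remaining factor $4 - \tau\sigma\norm{L}^2\p{2\theta - 1}$ is in fact strictly positive: since $\tau\sigma\norm{L}^2 > 0$ and $2\theta - 1 < 2\theta + 1$, we get $4 - \tau\sigma\norm{L}^2\p{2\theta - 1} > 4 - \tau\sigma\norm{L}^2\p{2\theta + 1} \geq 0$. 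For the denominator, $16\tau > 0$ because $\tau > 0$, and $8\theta - \tau\sigma\norm{L}^2\p{4\theta^2 + 1} > 0$ by \cref{lem:coeff_pos}, which is the only previously established fact I need to invoke.

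Combining these observations, the expression is a product of nonnegative factors divided by a strictly positive denominator, hence nonnegative, proving the inequality. For the strictness claim I would argue that, because the third numerator factor and the denominator are always strictly positive, the whole quantity is strictly positive if and only if both $4\theta^2 - 1 > 0$ and $4 - \tau\sigma\norm{L}^2\p{2\theta + 1} > 0$, i.e. if and only if $\theta > 1/2$ and $\tau\sigma\norm{L}^2 < 4/\p{1 + 2\theta}$; dually, the expression vanishes exactly when $\theta = 1/2$ or $\tau\sigma\norm{L}^2 = 4/\p{1 + 2\theta}$, matching the stated equality conditions. There is no substantive obstacle here: the argument is a careful sign bookkeeping, and the only genuine input is the positivity of the denominator supplied by \cref{lem:coeff_pos}; the one point worth stating explicitly is the strict domination $4 - \tau\sigma\norm{L}^2\p{2\theta - 1} > 4 - \tau\sigma\norm{L}^2\p{2\theta + 1}$, which guarantees that this factor never contributes a zero.
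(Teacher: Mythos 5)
Your proposal is correct and follows essentially the same route as the paper's proof: positivity of the denominator via \cref{lem:coeff_pos}, sign analysis of the three numerator factors, and the key observation that $4 - \tau\sigma\norm{L}^2\p{2\theta - 1} > 4 - \tau\sigma\norm{L}^2\p{2\theta + 1} \geq 0$, so the third factor never vanishes. The only cosmetic difference is that you make the factorization $4\theta^2 - 1 = \p{2\theta-1}\p{2\theta+1}$ and the equality/strictness case analysis fully explicit, which the paper leaves implicit.
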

\begin{proof}
We know from \cref{lem:coeff_pos} that the denominator is positive. The numerator consists of three factors. The first factor is positive if $\theta>{1}/{2}$ and 0 if $\theta={1}/{2}$. The third factor is larger than the second. The second factor is positive if $\tau \sigma \|L\|^2<{4}/{\p{1+2\theta}}$ and 0 if $\tau \sigma \|L\|^2={4}/{\p{1+2\theta}}$. Combining these gives the result.
\end{proof}

\subsection{Proof of \texorpdfstring{\cref{thm:pd}}{Theorem 1} }
\begin{proof}
Due to \cref{lem:lyapunov-inequality,lem:coeff_pos,lem:FkGk_properties,lem:coeff_nonneg}, we conclude, using a telescoping summation, that $F_k+G_k$ is summable and satisfies
\begin{align*}
    0\leq\sum_{k=0}^{K-1}\p{F_k+G_k}\leq V_0<\infty
\end{align*}
for each positive integer $K$. From Jensen's inequality, we conclude that
\begin{align*}
    \mathcal{D}_{x_\star,y_\star}(\bar{x}_K,\bar{y}_K)\leq\frac{1}{K}\sum_{k=0}^{K-1}\mathcal{D}(x_{k+1},y_{k+1})=\frac{1}{K}\sum_{k=0}^{K-1}\p{F_k+G_k}\leq \frac{V_0}{K},
\end{align*}
where $\mathcal{D}_{x_\star,y_\star}$ is defined in \cref{eq:pd_gap_fcn} and the equality follows from \cref{lem:FkGk_properties}. 
\end{proof}

\subsection{Proof of \texorpdfstring{\cref{thm:sequence}}{Theorem 2}}

\begin{proof}
Due to \cref{lem:lyapunov-inequality,lem:coeff_pos,lem:FkGk_properties,lem:coeff_nonneg}, we conclude that $\p{V_k}_{k=0}^{\infty}$ converges. This 
in turn implies that $\p{x_k}_{k=0}^{\infty}$ and $\p{y_k}_{k=0}^{\infty}$ remain bounded. Next, the same lemmas also imply, using a telescoping summation, that $F_k\to 0$,
\begin{align}
\frac{\p{4 \theta^2 - 1} \p{4 - \tau  \sigma  \norm{L}^2 \p{2 \theta + 1}} \p{4 - \tau  \sigma  \norm{L}^2 \p{2 \theta - 1}}}{16 \tau  \p{8 \theta - \tau  \sigma  \norm{L}^2 \p{4 \theta^2 + 1}}} \norm{x_{k+1} - x_k}^2&\to 0,\label{eq:pf:xk_resid_conv}
\end{align}
and
\begin{align}
\frac{1}{2 \sigma } \norm{y_{k+1} - y_k - \sigma  \p{\frac{1}{2} \p{L x_{k+1} - L x_{k+2}} - \theta \p{L x_k - L x_{k+1}}}}^2 &\to 0\label{eq:pf:yk_resid_conv}
\end{align}
as $k\to\infty$. Since the coefficients are nonzero due to the parameter assumptions and \cref{lem:coeff_nonneg}, we conclude from \cref{eq:pf:xk_resid_conv} that
\begin{align}\label{eq:x_residual_to_zero}
    \|x_{k+1}-x_k\|\to 0   
\end{align}
as $k\to\infty$. Combining \cref{eq:pf:yk_resid_conv} and \cref{eq:x_residual_to_zero} gives that 
\begin{align}\label{eq:y_residual_to_zero}
    \|y_{k+1}-y_k\|\to 0
\end{align}
as $k\to\infty$, since $L$ is a bounded linear operator. Moreover, \cref{eq:x_residual_to_zero} combined with the fact $\p{V_k}_{k=0}^{\infty}$ converges and $F_k\to 0$ as $k\to\infty$ give that 
\begin{align*}
    \p{\frac{1}{2 \tau } \norm{x_{k+1} - x_\star}^2 + \frac{1}{2 \sigma} \norm{y_k - y_\star + \sigma  \theta \p{L x_{k+1} - L x_k}}^2 }_{k=0}^{\infty}
\end{align*}
converges. Next, since $\p{y_k}_{k=0}^{\infty}$ is bounded, \cref{eq:x_residual_to_zero,eq:y_residual_to_zero} imply that 
\begin{align*}
    &\|y_{k+1}-y_\star\|^2 - \norm{y_k - y_\star + \sigma  \theta \p{L x_{k+1} - L x_k}}^2  \\
    &\quad = \|y_{k+1} - y_{k}\|^2 -  2\inner{y_k - y_\star}{\sigma  \theta \p{L x_{k+1} - L x_k} - \p{y_{k+1} - y_{k}} } \\
    &\quad\quad\quad - \norm{\sigma  \theta \p{L x_{k+1} - L x_k}}^2 \to 0
\end{align*}
as $k\to \infty$, from which we conclude that 
\begin{align*}
    \p{\frac{1}{2 \tau } \norm{x_{k+1} - x_\star}^2 + \frac{1}{2 \sigma} \|y_{k+1}-y_\star\|^2  }_{k=0}^{\infty}
\end{align*}
converges.

Construct the sequence $\p{z_k}_{k=0}^{\infty}$ such that $z_k=(x_{k+1},y_{k+1})$ for each nonnegative integer $k$ and let $Z=\{(x_\star,y_\star)\in\PrimS\times\DualS : \eqref{eq:kkt} {\hbox{ holds}}\}$ be the KKT solution set. Since $\p{z_k}_{k=0}^{\infty}$ is bounded, it has weakly convergent subsequences. Let $\p{z_{k_n}}_{n=0}^{\infty}$ be such a subsequence, with weak limit point $\bar{z}=(\bar{x},\bar{y})\in\PrimS\times\DualS$ say. Then the inclusion formulation of the algorithm in \cref{eq:CP-inclusion:f,eq:CP-inclusion:g} satisfies
\begin{align*}
   \partial f \p{x_{k_n+1}}+L^* y_{k_n}\ni\frac{1}{\tau } \p{x_{k_n} - x_{k_n+1}} \to 0,  \\
   \partial g^* \p{y_{k_n+1}} -Lx_{k_n+1}  \ni \frac{1}{\sigma } \p{y_{k_n} - y_{k_n+1}} + \theta L  \p{x_{k_n+1} - x_{k_n}} \to 0,
\end{align*}
as $n\to\infty$, since $x_{k_n+1}-x_{k_n}\to 0$ and $y_{k_n+1}-y_{k_n}\to 0$ as $n\to\infty$. Let us introduce the operator $A:\PrimS\times\DualS\rightrightarrows \PrimS\times\DualS$ defined as
\begin{align*}
    A(x,y)= \partial f(x)\times \partial g^*(y) +(L^*y,-Lx)
\end{align*}
for each $(x,y)\in\PrimS\times\DualS$.
This is a maximally monotone operator since it is the sum of two maximally monotone operators, the first one due to \cite[Theorem~20.25,Proposition~20.23]{Bauschke_Combettes_2017} and the second one due to \cite[Example~20.35]{Bauschke_Combettes_2017}, one of which has full domain \cite[Corollary~25.5]{Bauschke_Combettes_2017}. Therefore, 
\begin{align*}
A(x_{k_n+1},y_{k_n+1})\ni\p{\frac{1}{\tau } \p{x_{k_n} - x_{k_n+1}} - L^*\p{y_{k_n} - y_{k_n+1}}, \frac{1}{\sigma } \p{y_{k_n} - y_{k_n+1}} + \theta L  \p{x_{k_n+1} - x_{k_n}}} \to 0
\end{align*}
and we conclude, using weak-strong closedness of maximally monotone operators \cite[Proposition~20.38]{Bauschke_Combettes_2017}, that $0\in A(\bar{x},\bar{y})$, implying
\begin{align*}
  0\in \partial f \p{\bar{x}}+L^* \bar{y}, \\
  0\in\partial g^*(\bar{y})-L \bar{x},
\end{align*}
i.e., the weak limit point is a KKT point. Since $\p{\frac{1}{2\tau }\|x_{k+1}-x_\star\|^2+\tfrac{1}{2\sigma }\|y_{k+1}-y_\star\|^2}_{k=0}^{\infty}$ converges and all weak limit points belong to the KKT solution set $Z$, we invoke \cite[Lemma~2.47]{Bauschke_Combettes_2017} to conclude weak convergence to a KKT point.
\end{proof}

\section{Counterexample}\label{sec:counterexample}
This section proves that the result in \cref{thm:sequence} is tight, i.e., there is an example where (weak) convergence fails for each $\theta>1/2$ whenever $\tau\sigma\|L\|^2\geq 4/\p{1+2\theta}$. Let \(\PrimS = \Real\) and \(\DualS = \Real\), and consider problem \eqref{eq:pd_problem} for the case $f = g^{*} = 0$, and $L=1$, i.e.,
\begin{align}\label{eq:counterexample}
    \minimize_{x\in\Real}\maximize_{y\in\Real}\; xy.
\end{align}
The update rule, \eqref{eq:CP-iteration}, for the Chambolle--Pock method then becomes 
\begin{align*}
    x_{k+1} &= x_k - \tau y_k, \\
    y_{k+1} &= y_k + \sigma \p{x_{k+1} + \theta \p{x_{k+1} - x_k}},
\end{align*}
or equivalently 
\begin{align*}
    \begin{bmatrix}
        x_{k+1} \\
        y_{k+1}
    \end{bmatrix} =
    \begin{bmatrix}
        1 & - \tau \\
        \sigma & 1-\tau\sigma\p{1+\theta} 
    \end{bmatrix} 
    \begin{bmatrix}
        x_{k} \\
        y_{k}
    \end{bmatrix},
\end{align*}
where we assume that $\tau,\sigma>0$. Therefore, $\p{x_{k},y_{k}}_{k=0}^{\infty}$ converges for an arbitrary initial point $\p{x_0,y_{0}}\in\Real^2$ to the solution $\p{x_{\star},y_{\star}}=\p{0,0}\in\Real^2$ if and only if both eigenvalues of the matrix above have magnitude less than $1$. The two eigenvalues of the matrix are
\begin{align*}
        \lambda_{1} = \frac{1}{2}\p{2 - \tau\sigma\p{1+\theta} + \sqrt{\tau\sigma\p{\tau\sigma\p{1+\theta}^2 - 4} }}
    \end{align*}
    and
    \begin{align*}
        \lambda_{2} = \frac{1}{2}\p{2 - \tau\sigma\p{1+\theta} - \sqrt{\tau\sigma\p{\tau\sigma\p{1+\theta}^2 - 4} }}.
    \end{align*}
Suppose that the step-size condition of \cref{thm:sequence} is violated, i.e., that $\tau\sigma \geq 4/\p{1+2\theta}$. In this case, the eigenvalues are distinct real numbers satisfying
\begin{align*}
    \lambda_{2} \leq -1 \quad \text{ and } \quad \lambda_{2} < \lambda_{1} < \frac{1}{2}\p{2 - \tau\sigma\p{1+\theta} + \sqrt{\tau^2\sigma^2\p{1+\theta}^2} } = 1.
\end{align*}
In particular, the method fails to converge, which proves that the result in \cref{thm:sequence} is tight.

Note that the boundary condition $\tau \sigma = 4/\p{1+2\theta}$ implies that $\lambda_{2}=-1$. Although the sequence $\p{x_{k},y_{k}}_{k=0}^{\infty}$ does not necessarily converge in this setting, we have that $(x_{k+2},y_{k+2})-(x_k,y_k)\to 0$ as $k\to\infty$, i.e., the difference between every second iterate converges to $0$. It turns out that this is not a coincidence as it always holds for the Chambolle--Pock algorithm on the boundary, as shown next.

\section{Boundary}
\label{sec:boundary}

In this section, we show that whenever  $\theta>1/2$ and \(\tau  \sigma  \norm{L}^2 = {4}/{\p{1 + 2 \theta}}\), the sequence $\p{\|x_{k+2}-x_k\|^2}_{k=0}^{\infty}$ is summable, implying that the difference between every second iterate converges to 0.

Due to \cref{lem:coeff_nonneg}, we conclude that the coefficient in front of $\|x_{k+1}-x_k\|^2$ in the Lyapunov inequality in \cref{lem:lyapunov-inequality} is 0. Moreover,
\begin{align*}
\frac{8 \theta - \tau  \sigma  \norm{L}^2 \p{4 \theta^2 + 1}}{16 \tau } =  \frac{8 \theta(1+2\theta) - 4 \p{4 \theta^2 + 1}}{16 \tau (1+2\theta)}=\frac{2 \theta - 1 }{4 \tau (1+2\theta)}
\end{align*}
and
\begin{align*}
    \frac{4 \p{1 - \tau \sigma  \theta \norm{L}^2}}{8 \theta - \tau  \sigma  \norm{L}^2 \p{4 \theta^2 + 1}}&=\frac{4 \p{1-2\theta}}{\p{8 \theta - \tau  \sigma  \norm{L}^2 \p{4 \theta^2 + 1}}(1+2\theta)}\\
    &=\frac{4 \p{1-2\theta}(1+2\theta)}{(8\theta-4)(1+2\theta)}\\
    &=-1,
\end{align*}
implying that the Lyapunov inequality in \cref{lem:lyapunov-inequality} reads
  \begin{multline*}
    V_{k+1} - V_k + F_k + G_k \\
    \begin{aligned}
      &\leq  - \frac{1}{2 \sigma } \norm{y_{k+1} - y_k - \sigma  \p{\frac{1}{2} \p{L x_{k+1} - L x_{k+2}} - \theta \p{L x_k - L x_{k+1}}}}^2 \\
      &\qquad - \frac{2 \theta - 1}{4 \tau  \p{1 + 2 \theta}} \norm{x_{k+2} - x_k}^2,
    \end{aligned}
  \end{multline*}
  where $V_k$, $F_k$, and $G_k$ are nonnegative for all nonnegative integers $k$. 
  Since $\theta>{1}/{2}$, we conclude using a telescoping summation argument that $\p{\|x_{k+2}-x_k\|^2}_{k=0}^{\infty}$ indeed is summable.

\section*{Declarations}
\subsection*{Funding}
This work was partially supported by the ELLIIT Strategic Research Area and the Wallenberg AI, Autonomous Systems and Software Program (WASP) funded by the Knut and Alice Wallenberg Foundation. S.~Banert and P.~Giselsson acknowledge support from Vetenskapsr\aa{}det grant VR~2021-05710.

\printbibliography

\end{document}